\newtheorem{theorem}{Theorem}[section]
\newtheorem{lemma}[theorem]{Lemma}
\newtheorem{proposition}[theorem]{Proposition}
\newtheorem{definition}[theorem]{Definition}
\newtheorem*{ack}{Acknowledgements}
\numberwithin{equation}{section}
\begin{document}

\title[Trigonometric Darboux transformations]
{Trigonometric Darboux transformations and Calogero-Moser matrices}

\author[L.~Haine]{Luc Haine}
\address{L.H., Department of Mathematics, Universit\'e catholique de
Louvain, Chemin du Cyclotron 2, 1348 Louvain-la-Neuve, Belgium}
\email{luc.haine@uclouvain.be}

\author[E.~Horozov]{Emil~Horozov}
\address{E.H., Department of Mathematics and Informatics,
Sofia University, 5 J. Bourchier Blvd., Sofia 1126, Bulgaria}
\address{Institute of Mathematics and Informatics, Bulg. Acad. of Sci.,
Acad. G. Bonchev Str., Block 8, 1113 Sofia, Bulgaria }
\email{horozov@fmi.uni-sofia.bg}

\author[P.~Iliev]{Plamen~Iliev}
\address{P.I., School of Mathematics, Georgia Institute of Technology,
Atlanta, GA 30332--0160, USA}
\email{iliev@math.gatech.edu}

\subjclass[2000]{35Q53, 37K10}
\keywords{KdV-like equations, Calogero-Moser spaces}
\date{January 30, 2008}
\begin{abstract} We characterize in terms of Darboux transformations the
spaces in the Segal-Wilson rational Grassmannian, which lead to commutative
rings of differential operators having coefficients which are rational
functions of $e^x$. The resulting subgrassmannian is parametrized in terms of
trigonometric Calogero-Moser matrices.
\end{abstract}

\maketitle
%%%%%%%%%%%%%%%%%%%%%%%%%%%%%%%%%%%%%%%%%%%%%%%%%%%%%%%%%%%%%%%%%%%%%%%%%%%%%%%%%%%%%%%%%%%%%%%%%%%%%%%% Section 1
\section{Introduction}
The rational Segal-Wilson Grassmannian $Gr^{rat}$ parametrizes the soliton
solutions of the Kadomtsev-Petviashvili (KP) equation. In \cite{W1}, Wilson
embarked upon a study of a subgrassmannian $Gr^{ad}\subset Gr^{rat}$, that he
called the adelic Grassmannian, which parametrizes the solutions of the KP
equation, rational in $x$ and vanishing as $x\to\infty$. The adelic Grassmannian
has (and is indeed characterized by) a remarkable bispectral involution
$V\to b(V), V\in Gr^{ad}$, which exchanges the role of the "space" and the
"spectral" variables in the corresponding stationary wave functions
$\psi_V(x,z)$, that is
\begin{equation}\label{1.1}
\psi_{b(V)}(x,z)=\psi_V(z,x).
\end{equation}
In particular, the reduced stationary wave function $e^{-xz}\psi_V(x,z)$
depends rationally not only on $z$, but also on $x$. In \cite{W2}, Wilson gave
an illuminating explanation for this involution, by showing that
\begin{equation} \label{1.2}
\psi_V(x,z)=e^{xz}\mbox{det}\big\{I-(xI-X)^{-1}(zI-Z)^{-1}\big\},
\end{equation}
with $(X,Z)$ an element of a so-called Calogero-Moser space
\begin{equation*}
C_N=\big\{(X,Z)\in gl(N,\mathbb{C})\times gl(N,\mathbb{C}):
\mbox{rank}([X,Z]+I)=1\big\}/GL(N,\mathbb{C}).
\end{equation*}
Here, $gl(N,\mathbb{C})$ denotes the space of complex $N\times N$ matrices,
$I$ is the identity matrix and the complex linear group $GL(N,\mathbb{C})$
acts by simultaneous conjugation of $X$ and $Z$. The bispectral involution
\eqref{1.1} becomes transparent when expressed at the level of Calogero-Moser
spaces, as it is given by $(X,Z)\to (Z^t,X^t)$, where $X^t$ and $Z^t$ are the
transposes of $X$ and $Z$.

In \cite{Ha}, one of us, motivated by previous studies of the first and third
authors on a discrete-continuous version of the bispectral problem in
\cite{HI} and \cite{I}, suggested to study yet another subgrassmannian of
$Gr^{rat}$, the so-called trigonometric Grassmannian
$Gr^{trig}\subset Gr^{rat}$, characterized by the property that the reduced
stationary wave function $e^{-xz}\psi_V(x,z)$ should depend rationally on
$e^x$. The goal of this note is to establish a formula similar to \eqref{1.2},
for a space $V\in Gr^{trig}$, namely
\begin{equation}\label{1.3}
\psi_V(x,z)=e^{xz}\mbox{det}\big\{I-X(e^xI-X)^{-1}(zI-Z)^{-1}\big\},
\end{equation}
with $(X,Z)$ belonging now to a trigonometric Calogero-Moser space
\begin{multline}\label{1.4}
C_N^{trig}=\big\{(X,Z)\in GL(N,\mathbb{C})\times gl(N,\mathbb{C}): \\
\mbox{rank}(XZX^{-1}-Z+I)=1\big\}/GL(N,\mathbb{C}).
\end{multline}
The formula was conjectured in \cite{Ha} and established only
for a generic situation. There are two
main ingredients in the proof, which will allow us to derive the result from
\eqref{1.2}. First, the notion of bispectral Darboux transformations as
introduced and used in other contexts by the second author in collaboration
with Bakalov and Yakimov in \cite{BHY1} and \cite{BHY2}; second, the discovery
in \cite{HI} that if $\tau(t_1,t_2,t_3,\ldots)$ is a tau-function of the KP
hierarchy, then
\begin{equation}\label{1.5}
\tau(n,t_1,t_2,t_3,\ldots)=\tau\big(t_1+n,t_2-\frac{n}{2},t_3+\frac{n}{3},
\ldots\big),\quad n\in\mathbb{Z},
\end{equation}
is a tau-function of the discrete KP hierarchy. Indeed, the trick to
establish \eqref{1.3} is to show via the technique of bispectral Darboux
transformations, that
\begin{equation}
\psi_V^b(n,z)=\psi_V(\mbox{log}(1+z),n),
\end{equation}
is a (stationary) wave function of the discrete KP hierarchy, which can be
constructed from an adelic tau-function $\tau_{b(V)}(t_1,t_2,\ldots)$, with
$b(V)\in Gr^{ad}$, via the formula \eqref{1.5}.

Finally, we like to mention that during the ISLAND 3 conference where this
work was presented, Oleg Chalykh and Alexander Varchenko informed us that they
have obtained results related to ours, though with rather different aims and
techniques, see \cite{BC} and \cite{MTV}.
%%%%%%%%%%%%%%%%%%%%%%%%%%%%%%%%%%%%%%%%%%%%%%%%%%%%%%%%%%%%%%%%%%%%%%%%%%%%%%%%%%%%%%%%%%%%%%%%%%%%%%%%%% Section 2
\section{The rational Grassmannian}
In this section, we review the definition of the rational Grassmannian
$Gr^{rat}$ in terms of Darboux transformations, following \cite{BHY1}.
\begin{definition}
A function $\psi(x,z)$ is a Darboux transform of $e^{xz}$, if and only if
there exist monic polynomials $f(z),g(z)$ and monic differential operators
$P(x,\partial), Q(x,\partial)$, with $\partial=\frac{\partial}{\partial x}$,
such that
\begin{align}
\psi(x,z)=\frac{1}{f(z)}P(x,\partial)e^{xz}\label{2.1}\\
e^{xz}=\frac{1}{g(z)}Q(x,\partial)\psi(x,z),\label{2.2}
\end{align}
with the order of $P(x,\partial)$ equal to the degree of $f(z)$.
\end{definition}

Obviously
\begin{equation*}
Q(x,\partial)P(x,\partial)e^{xz}=f(z)g(z)e^{xz},
\end{equation*}
so that denoting the polynomial $f(z)g(z)$ by $h(z)$, we see that
\begin{equation}\label{2.3}
h(\partial)=Q(x,\partial)P(x,\partial).
\end{equation}
On the other hand, $\psi(x,z)$ satisfies
\begin{equation}\label{2.4}
P(x,\partial)Q(x,\partial)\psi(x,z)=f(z)g(z)\psi(x,z),
\end{equation}
showing that the operator $L=PQ$ is a traditional Darboux transformation of
the operator $h(\partial)$ with constant coefficients, which justifies the
terminology.

We shall denote by $\mathbb{C}[z]$ (resp. $\mathbb{C}(z)$) the space of
polynomials (resp. rational functions) in $z$. According to \cite{W1}, the
rational Grassmannian $Gr^{rat}$ is formed by subspaces
$V\subset \mathbb{C}(z)$ satisfying
\begin{equation}\label{2.5}
g(z)\mathbb{C}[z]\subset V\subset f(z)^{-1}\mathbb{C}[z],
\end{equation}
where $f(z)$ and $g(z)$ are polynomials, and the codimension of $V$ in
$f(z)^{-1}\mathbb{C}[z]$ is equal to the degree of $f(z)$. Using the
terminology of Sato's theory of the KP equation, the stationary wave function
$\psi_V(x,z)$ (the Baker function in \cite{W1}) of such a space turns out to
be a Darboux transform of $e^{xz}$, as defined above. In fact, the two notions
are equivalent. We shall need the bilinear form on the space $\mathbb{C}(z)$
defined by
\begin{equation}\label{2.6}
B(u,v)=\mbox{res}_zu(z)v(z),\quad u,v\in\mathbb{C}(z),
\end{equation}
with $\mbox{res}_z$ the coefficient of $z^{-1}$ in the Laurent expansion
around $\infty$.
\begin{proposition} A function $\psi(x,z)$ is the stationary wave function of
a space $V\in Gr^{rat}$ if and only if it is a Darboux transform of $e^{xz}$.
\end{proposition}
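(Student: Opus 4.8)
The plan is to establish both implications by exploiting the explicit algebraic description \eqref{2.5} of spaces in $Gr^{rat}$ together with the defining relations \eqref{2.1}–\eqref{2.2} of a Darboux transform, using the residue pairing \eqref{2.6} as the bridge between them.

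Let me sketch the two directions.

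=== LaTeX output ===

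\begin{proof}[Proof proposal]
The plan is to prove the two implications separately, using the residue pairing \eqref{2.6} as the dictionary that translates between the algebraic condition \eqref{2.5} defining $Gr^{rat}$ and the operator identities \eqref{2.1}--\eqref{2.2} defining a Darboux transform.

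First I would treat the direction ``Darboux transform $\Rightarrow$ stationary wave function of some $V\in Gr^{rat}$''. Suppose $\psi(x,z)$ satisfies \eqref{2.1} and \eqref{2.2}, so that $\psi(x,z)=\frac{1}{f(z)}P(x,\partial)e^{xz}$ with $P$ monic of order $\deg f$, and $e^{xz}=\frac{1}{g(z)}Q(x,\partial)\psi(x,z)$. The reduced wave function $\widehat\psi(x,z)=e^{-xz}\psi(x,z)=\frac{1}{f(z)}e^{-xz}P(x,\partial)e^{xz}$ is, for each fixed $x$, a rational function of $z$ lying in $f(z)^{-1}\mathbb{C}[z]$, since applying $P(x,\partial)$ to $e^{xz}$ produces a polynomial in $z$ of degree $\deg f$ times $e^{xz}$. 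The candidate space is $V=\mathrm{span}\{\widehat\psi(x,z),\partial_x\widehat\psi(x,z),\partial_x^2\widehat\psi(x,z),\dots\}$, the span over $\mathbb{C}$ of all $x$-derivatives; equivalently one may define $V$ via the conditions coming from $Q$. The key computation is to show the two inclusions in \eqref{2.5}: the right inclusion $V\subset f(z)^{-1}\mathbb{C}[z]$ follows from the observation above, while the left inclusion $g(z)\mathbb{C}[z]\subset V$ is extracted from \eqref{2.2}, which says that multiplying $\psi$ by the operator $Q$ recovers $e^{xz}$ up to the factor $g(z)$; differentiating in $x$ and using that $z^k e^{xz}=\partial_x^k e^{xz}$ shows that each $g(z)z^k$ is reachable. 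The codimension count---that $V$ has codimension $\deg f$ in $f(z)^{-1}\mathbb{C}[z]$---follows from matching it against the order of $P$, which equals $\deg f$ by hypothesis, together with the factorization \eqref{2.3}.

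For the reverse direction, given $V\in Gr^{rat}$ satisfying \eqref{2.5} with codimension $\deg f$, I would recover the operators $P$ and $Q$. The operator $P(x,\partial)$ is the normalized differential operator whose kernel is spanned by the functions $e^{xz_i}$ obtained from the finitely many ``conditions'' cutting $V$ out of $f(z)^{-1}\mathbb{C}[z]$; concretely, each linear functional on $f(z)^{-1}\mathbb{C}[z]$ that vanishes on $V$ can be written via the pairing \eqref{2.6} as $B(\cdot\,,c(z))$ for suitable $c(z)$, and these functionals determine $P$ by the requirement that $P$ annihilate the corresponding exponentials so that \eqref{2.1} holds and $\psi_V\in f(z)^{-1}\mathbb{C}[z]$ for each $x$. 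The relation $g(z)\mathbb{C}[z]\subset V$ then guarantees the existence of the inverse operator $Q$ satisfying \eqref{2.2}, because it says $V$ contains enough of $\mathbb{C}[z]$ (after multiplication by $g$) to reconstruct $e^{xz}$ by applying a differential operator to $\psi_V$; the monicity and the factorization $h(\partial)=QP$ of \eqref{2.3} fix $Q$ uniquely.

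The main obstacle I anticipate is the bookkeeping that makes the equivalence between ``linear conditions defining $V$'' and ``the kernel structure of the differential operator $P$'' precise, and in particular verifying that the order of $P$ matches $\deg f$ exactly (rather than merely being bounded by it). This is where the residue pairing \eqref{2.6} does the real work: one must check that the conditions defining $V$, transported through $B$, correspond bijectively to a point in the flag of derivatives at the finitely many spectral points, so that the dimension count on the Grassmannian side and the order count on the operator side agree. I expect this nondegeneracy---essentially that the relevant pairing matrix is invertible---to be the crux of the argument, with the remaining steps being the more routine manipulations of exponentials and their $x$-derivatives.
\end{proof}
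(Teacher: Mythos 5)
Your overall strategy is sound, and on the one direction the paper actually proves (Darboux transform $\Rightarrow$ stationary wave function of some $V\in Gr^{rat}$) you take a genuinely different route for the crucial inclusion $g(z)\mathbb{C}[z]\subset V$. The paper introduces the adjoint wave function $\psi^*(x,z)=g(z)^{-1}Q^*e^{-xz}$, defines $V^*$ as the span of its $x$-derivatives at $x=0$, and proves $B(V,V^*)=0$ by a residue computation in the ring of formal pseudo-differential operators (Dickey's lemma converting $\mathrm{res}_z$ into $\mathrm{res}_\partial$, combined with the factorization \eqref{2.3}); the inclusion $g(z)\mathbb{C}[z]\subset V$ is then obtained by taking the $B$-orthogonal complement of \eqref{2.10}. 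You instead extract it directly from \eqref{2.2}: applying $\partial_x^k$ and evaluating at $x=0$ exhibits each $g(z)z^k$ as a finite linear combination of the $\partial_x^m\psi\vert_{x=0}$. Your route is more elementary -- no pseudo-differential calculus -- at the cost of not producing the adjoint space $V^*$, which the paper's duality argument yields for free. Both arguments require the preliminary observation, which the paper makes and you omit, that the coefficients of $P$ and $Q$ are rational in $x$ and finitely many $e^{\lambda x}$ (hence may be assumed regular at $x=0$), so that evaluation of Taylor coefficients there makes sense. For the converse, which the paper does not prove at all, your sketch (conditions cutting $V$ out of $f(z)^{-1}\mathbb{C}[z]$, transported through $B$, determine the kernel of $P$) is the standard Wilson-style argument; note only that in general the kernel is spanned by functions of the form $p(x)e^{xz_i}$ with $p$ polynomial, not pure exponentials $e^{xz_i}$ as your first formulation states -- your later mention of flags of derivatives at the spectral points is the correct picture.

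One genuine slip: you define the candidate space $V$ as the span of the $x$-derivatives of the \emph{reduced} wave function $e^{-xz}\psi(x,z)$. That is the wrong space -- for the vacuum $\psi=e^{xz}$ it yields $\mathbb{C}$ rather than $\mathbb{C}[z]$. As in \eqref{2.9}, $V$ must be the span of the derivatives of $\psi$ itself evaluated at $x=0$; this is in fact the space your own computation with \eqref{2.2} implicitly uses, since $\partial_x^k(Q\psi)\vert_{x=0}$ is a combination of the $\partial_x^m\psi\vert_{x=0}$. The inclusion $\partial_x^i\psi\vert_{x=0}\in f(z)^{-1}\mathbb{C}[z]$ still holds because $\partial_x^i\psi=f(z)^{-1}(\partial^i\circ P)e^{xz}$, so once the definition is repaired the rest of your argument goes through.
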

\begin{proof} We only sketch the proof of the "if" part. The operator $P$ in
\eqref{2.1} being monic, it is given by
\begin{equation}\label{2.7}
P(\phi)
=\frac{\mbox{Wr}(\phi_1,\ldots,\phi_K,\phi)}{\mbox{Wr}(\phi_1,\ldots,\phi_K)},
\end{equation}
with $\phi_1,\ldots,\phi_K$, a basis of the kernel of $P$, where $Wr$ denotes
the Wronski determinant. From the factorization \eqref{2.3}, we have that
$\mbox{ker}\;P\subset \mbox{ker}\;h(\partial)$ and
$\mbox{ker}\;Q=P(\mbox{ker}\;h(\partial))$. Since $h(\partial)$ is a constant
coefficients operator, it follows that the coefficients of $P$ and $Q$ are
rational functions of $x$ and $e^{\lambda x}$, for a finite number of values
of $\lambda$.

Hence, if we introduce the function
\begin{equation}\label{2.8}
\psi^*(x,z)=\frac{1}{g(z)}Q^*e^{-xz},
\end{equation}
with $Q^*$ the formal adjoint of $Q$, defined by
$(a(x)\partial)^*=-\partial\circ a(x)$, both
$e^{-xz}\psi(x,z)$ and $e^{xz}\psi^*(x,z)$ are rational functions of
$x,e^{\lambda x}$ and $z$. Assuming (without loss of generality) that these
functions are regular at $x=0$, the coefficients of the Taylor expansions of
$\psi(x,z)$ and $\psi^*(x,z)$ around $x=0$, generate two subspaces $V$ and
$V^*$ of $\mathbb{C}(z)$, such that
\begin{align}
V&=\mbox{span} \{\partial^i\psi(x,z)_{\vert x=0},\;
i=0,1,2,\ldots\}\subset f(z)^{-1}\mathbb{C}[z],\label{2.9}\\
V^*&=\mbox{span}\{\partial^i\psi^*(x,z)_{\vert x=0},\;i=0,1,2,\ldots\}
\subset g(z)^{-1}\mathbb{C}[z],\label{2.10}
\end{align}
and the codimension of $V$ in $f(z)^{-1}\mathbb{C}[z]$ is equal to the degree
of $f$.

The spaces $V$ and $V^*$ are orthogonal with respect to the bilinear form $B$
defined in \eqref{2.6}, as follows from a simple computation in the ring of
formal pseudo-differential operators. Indeed from \eqref{2.1} and \eqref{2.8},
\begin{equation}
\psi(x,z)=P\circ f(\partial)^{-1}e^{xz}\quad\mbox{and}\quad
\psi^*(x,z)=\big(g(\partial)^{-1}\circ Q\big)^*e^{-xz}.
\end{equation}
Hence, for all $i,j\geq 0$, by the "very simple and extremely useful
lemma" 6.2.5 in \cite{D}, using \eqref{2.3}, we compute
\begin{align*}
&\mbox{res}_z\partial^i\psi(x,z)\partial^j\psi^*(x,z)\\&=
(-1)^j\;\mbox{res}_z\Big(\partial^i\circ P\circ
f(\partial)^{-1}e^{xz}\Big)\Big(\big(g(\partial)^{-1}\circ
Q\circ\partial^j\big)^*e^{-xz}\Big)\\
&=(-1)^j\;\mbox{res}_{\partial}\big(\partial^i\circ P\circ
f(\partial)^{-1}\big)\circ \big(g(\partial)^{-1}\circ Q\circ\partial^j\big)\\
&=(-1)^j\;\mbox{res}_{\partial}\partial^i\circ P\circ (Q\circ P)^{-1}\circ
Q\circ\partial^j\\
&=(-1)^j\;\mbox{res}_{\partial}\partial^{i+j}=0,
\end{align*}
with $\mbox{res}_\partial\sum a_k\partial^k=a_{-1}$. Taking the orthogonal
complement with respect to $B$ of \eqref{2.10}, we deduce that
$g(z)\mathbb{C}[z]\subset V$, which combined with \eqref{2.9}, establishes
that $V\in Gr^{rat}$, as defined in \eqref{2.5}. It is easy to check that
$\psi(x,z)=\psi_V(x,z)$, which finishes the proof.
\end{proof}
%%%%%%%%%%%%%%%%%%%%%%%%%%%%%%%%%%%%%%%%%%%%%%%%%%%%%%%%%%%%%%%%%%%%%%%%%%%%%%%%%%%%%%%%%%%%%%%%%%%%%%%% Section 3
\section{The trigonometric Grassmannian}
\begin{definition} A Darboux transform $\psi(x,z)$ of $e^{xz}$, will be called
trigonometric if and only if the operators $P$ and $Q$ in \eqref{2.1} and
\eqref{2.2}, have coefficients which are rational functions of $e^x$, i.e.
$P,Q\in\mathbb{C}(e^x)[\partial]$.
\end{definition}
In this section, we characterize the trigonometric Darboux transforms. With
the notations of the previous section, let us write the constant coefficients
operator $h(\partial)$ in \eqref{2.3} as
\begin{equation} \label{3.1}
h(\partial)=\prod_{r=1}^n\prod_{j=0}^{n_r}(\partial-\lambda_r+j)^{m_{r,j}},
\end{equation}
where $\lambda_1,\ldots,\lambda_n$ are distinct complex numbers such that
$\lambda_r-\lambda_s\notin\mathbb{Z}$ for $r\neq s$ and $m_{r,j}$, the
multiplicities of the roots $\lambda_{r}-j$, are nonnegative integers, with
$m_{r,0}>0$. Then, the kernel of $h(\partial)$ is given by
\begin{equation*}
\mbox{ker}\;h(\partial)=\bigoplus_{r=1}^n W_r,
\end{equation*}
where
\begin{equation*}
W_r=\mbox{span}\{x^k e^{(\lambda_r-j)x}:k=0,1,\ldots,m_{r,j}-1,
\;j=0,1,\ldots,n_r\}.
\end{equation*}
\begin{lemma} Let $P\in \mathbb{C}(e^x)[\partial]$ be an operator such that
the factorization \eqref{2.3} holds. Then
\begin{align*}
i)\;&\phi(x)\in \emph{ker}\;P\Rightarrow\phi(x+2\pi il)\in\emph{ker}\;P,\;
\forall\;l\in\mathbb{Z}.\\
ii)\;&\emph{ker}\;P=\bigoplus_{r=1}^n \big(W_r\cap \emph{ker}\;P\big).
\end{align*}
\end{lemma}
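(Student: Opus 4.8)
The plan is to exploit the fact that the coefficients of $P$, being rational functions of $e^x$, are invariant under the shifts $x\mapsto x+2\pi il$, $l\in\mathbb{Z}$. First I would record that the factorization \eqref{2.3} forces $\ker P\subseteq\ker h(\partial)=\bigoplus_{r=1}^n W_r$: if $P\phi=0$ then $h(\partial)\phi=Q(P\phi)=0$. Thus everything takes place inside the finite-dimensional space $\ker h(\partial)$, and the whole lemma becomes a statement about a single linear operator acting on it. For part (i) I would introduce the shift operator $T_l\colon\phi(x)\mapsto\phi(x+2\pi il)$. Writing $P=\sum_k a_k(e^x)\partial^k$ and using $e^{x+2\pi il}=e^x$, one checks that $T_l$ commutes with $P$, since $\partial$ commutes with $T_l$ and the coefficients $a_k(e^x)$ are left unchanged by the shift. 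Consequently $P(T_l\phi)=T_l(P\phi)=0$ whenever $P\phi=0$, which is exactly (i).

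For part (ii) I would study $T:=T_1$ as a linear endomorphism of $\ker h(\partial)$. A direct computation on the basis elements $x^k e^{(\lambda_r-j)x}$, using $(x+2\pi i)^k=\sum_m\binom{k}{m}(2\pi i)^{k-m}x^m$ together with $e^{-2\pi ij}=1$, shows that each $W_r$ is $T$-invariant and that $T|_{W_r}$ has the single eigenvalue $e^{2\pi i\lambda_r}$, with $T-e^{2\pi i\lambda_r}I$ nilpotent on $W_r$ because of the polynomial factors. The hypothesis $\lambda_r-\lambda_s\notin\mathbb{Z}$ for $r\neq s$ guarantees that these eigenvalues are pairwise distinct, so the $W_r$ are precisely the generalized eigenspaces of $T$ on $\ker h(\partial)$. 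The associated spectral projections $\pi_r$ onto $W_r$ can therefore be written as polynomials in $T$. Since $\ker P$ is $T$-invariant by (i), it is stable under every $\pi_r$, whence $\pi_r(\ker P)\subseteq\ker P\cap W_r$; because $\sum_r\pi_r=I$, this yields the desired splitting $\ker P=\bigoplus_r(\ker P\cap W_r)$.

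The one point requiring genuine care, and the only place where the specific structure \eqref{3.1} of $h$ really enters, is the eigenvalue analysis of $T$: one must see that the exponents $\lambda_r-j$ within a single block $W_r$ all collapse to the one eigenvalue $e^{2\pi i\lambda_r}$ under the $2\pi i$-shift, thanks to $j\in\mathbb{Z}$, while the blocks for different $r$ acquire genuinely distinct eigenvalues, thanks to $\lambda_r-\lambda_s\notin\mathbb{Z}$. Once this spectral picture is in place, part (ii) reduces to the standard fact that a $T$-invariant subspace respects the generalized-eigenspace decomposition, and the remaining verifications are routine.
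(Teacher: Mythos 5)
Your proof is correct, and it takes a cleaner, more structural route than the paper. The paper also begins with the $T$-invariance of $\ker P$ (your part (i) is identical), but for part (ii) it proceeds by hand: expanding $\phi\in\ker P$ over the blocks, forming the differences $\overline{\phi}(x)=\phi(x)-e^{-2\pi i\lambda_s}\phi(x+2\pi i)$ to kill the top-degree term in the $e^{\lambda_s x}$ block while rescaling the others by the nonzero constants $1-e^{2\pi i(\lambda_r-\lambda_s)}$, iterating to strip off all exponentials but one, and finally invoking an induction with careful bookkeeping of the leading polynomial coefficients $p_{r,k_r}(e^{-x})$. You instead recognize that $T\colon\phi(x)\mapsto\phi(x+2\pi i)$ is a linear endomorphism of the finite-dimensional space $\ker h(\partial)$ whose generalized eigenspaces are exactly the $W_r$ (eigenvalues $e^{2\pi i\lambda_r}$, pairwise distinct since $\lambda_r-\lambda_s\notin\mathbb{Z}$, with $e^{-2\pi i\lambda_r}T-I$ nilpotent on $W_r$ because it strictly lowers the $x$-degree, the integer shifts $\lambda_r-j$ collapsing to the single eigenvalue since $j\in\mathbb{Z}$), so the spectral projections $\pi_r$ are polynomials in $T$, preserve the $T$-invariant subspace $\ker P$, and the splitting $\ker P=\bigoplus_r(\ker P\cap W_r)$ falls out of $\sum_r\pi_r=I$ with no induction. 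The two arguments are secretly the same mechanism — the paper's iterated differences are exactly applications of the operators $I-e^{-2\pi i\lambda_s}T$, i.e., the paper constructs your spectral projections by hand — but your formulation replaces the elimination-plus-induction with the standard fact that a $T$-invariant subspace respects the generalized-eigenspace decomposition, which is shorter and less error-prone; the paper's explicit version has the mild advantage of being self-contained and of exhibiting, for each $r$, a kernel element with the same leading coefficient $p_{r,k_r}(e^{-x})$, a normalization that resonates with the basis normalization used later around \eqref{3.6}.
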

\begin{proof} i) The assertion follows immediately from the invariance of the
coefficients of $P$ under the change $x\to x+2\pi il, l\in\mathbb{Z}$, since
they are rational functions of $e^x$.

ii) Since $\mbox{ker}\;P\subset \mbox{ker}\;h(\partial)$, any $\phi(x)$ in
the kernel of $P$ can be expanded as
\begin{equation*}
\phi(x)=\sum_{r=1}^n\Big\{\sum_{k=0}^{k_r} p_{r,k}(e^{-x})x^k\Big\}
e^{\lambda_rx},
\end{equation*}
with $p_{r,k}(e^{-x})\in\mathbb{C}[e^{-x}]$, some polynomial in $e^{-x}$.
The result will follow by induction, if we can show that for every
$r\in\{1,2,\ldots,n\}$, there exits an element
$\hat{\phi}_r(x)\in\mbox{ker}\;P$ of the form
\begin{equation}\label{3.2}
\hat{\phi}_r(x)=\Big\{p_{r,k_r}(e^{-x})x^{k_r}
+\sum_{k<k_r}\hat{p}_{r,k}(e^{-x})x^k\Big\}e^{\lambda_r x},
\end{equation}
with the same polynomial $p_{r,k_r}(e^{-x})$ in front of the highest power
$x^{k_r}$, and some other polynomials $\hat{p}_{r,k}(e^{-x})$ as coefficients
of $x^k,k<k_r$.

To establish the existence of $\hat{\phi}_r(x)$, we observe that since
$\phi(x+2\pi i)\in \mbox{ker}\;P$, for any $s\in\{1,2,\ldots,n\}$, we have that
\begin{align*}
\overline{\phi}(x)&=\phi(x)-e^{-2\pi i\lambda_s}\phi(x+2\pi i)\\
&=\sum_{r=1}^n\Big\{\big(1-e^{2\pi i(\lambda_r-\lambda_s)}\big)
p_{r,k_r}(e^{-x})x^{k_r}+
\sum_{k<k_r}\overline{p}_{r,k}(e^{-x})x^k\Big\}e^{\lambda_rx},
\end{align*}
is an element of the kernel of $P$. Since
$\lambda_r-\lambda_s\not\in\mathbb{Z}$, for $r\neq s$, the coefficient of
$x^{k_r}$ is the same polynomial $p_{r,k_r}(e^{-x})$ multiplied by a nonzero
constant for $r\neq s$, and vanishes for $r=s$. Iterating the process, we can
produce an element from the kernel of the form
\begin{equation*}
\sum_{r\neq s}\Big\{c_{r,k_r}p_{r,k_r}(e^{-x})x^{k_r}+
\sum_{k<k_r}\overline{p}_{r,k}(e^{-x})x^k\Big\}e^{\lambda_rx},
\end{equation*}
with nonzero constants $c_{r,k_r}$. Pursuing the process, we can eliminate all
exponentials $e^{\lambda_s x}$, except one, producing an element from the
kernel of the form \eqref{3.2}. This concludes the proof of the lemma.
\end{proof}

\begin{theorem} The Darboux transform defined by \eqref{2.1} and \eqref{2.2}
is trigonometric if and only if
\begin{equation*}
\emph{ker}\;P=\bigoplus_{r=1}^nK_r,
\end{equation*}
with $K_r$ a subspace of $W_r$ having a basis which is a union of sets of the
form
\begin{equation}\label{3.3}
\frac{1}{l!}\partial_y^l\Bigg(\sum_{j=0}^{n_r}\sum_{k=0}^{m_{r,j}-1}c_{r,k,j}
y^ke^{(\lambda_r-j)x}\Bigg)\Bigg\vert_{y=x},\quad l=0,1,\ldots,l_0,
\end{equation}
where $l_0=\emph{max}\{k: c_{r,k,j}\neq 0\;\emph{for some}\;j\}$.
\end{theorem}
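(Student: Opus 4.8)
The plan is to reduce the statement to a Jordan-basis computation for a single nilpotent operator, after identifying the trigonometric condition with invariance of $\ker P$ under the shift $T\colon\phi(x)\mapsto\phi(x+2\pi i)$. The main device is the operator $\mathcal{D}$ on $\ker h(\partial)$ that differentiates only the polynomial part, $\mathcal{D}\big(\sum_j q_j(x)e^{(\lambda_r-j)x}\big)=\sum_j q_j'(x)e^{(\lambda_r-j)x}$; it is nilpotent on each $W_r$, and a direct Taylor computation shows that the shift acts on $W_r$ as $T|_{W_r}=e^{2\pi i\lambda_r}e^{2\pi i\mathcal{D}}$. Since $e^{2\pi i\mathcal{D}}=I+(\text{nilpotent})$ and $\mathcal{D}$ is recovered from it by the finite nilpotent-logarithm series, a subspace of $W_r$ is $T$-invariant if and only if it is $\mathcal{D}$-invariant. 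I would also record the key identification up front: the set in \eqref{3.3} is precisely the $\mathcal{D}$-Jordan chain $\{\tfrac{1}{l!}\mathcal{D}^l v:\ l=0,\ldots,l_0\}$ generated by $v=\Phi_r(x,x)$, where $\Phi_r(x,y)=\sum_j q_j(y)e^{(\lambda_r-j)x}$ and $l_0+1$ is the chain length.

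For the forward implication, suppose the transform is trigonometric. Lemma (ii) already gives $\ker P=\bigoplus_r K_r$ with $K_r=W_r\cap\ker P$, and Lemma (i) shows $\ker P$ is $T$-invariant; since each $W_r$ is $T$-invariant, so is each $K_r$, hence each $K_r$ is $\mathcal{D}$-invariant. Now $\mathcal{D}$ restricted to the finite-dimensional invariant subspace $K_r$ is nilpotent, so it admits a Jordan basis, that is, a basis which is a union of $\mathcal{D}$-chains $\{v,\mathcal{D}v,\ldots,\mathcal{D}^{e-1}v\}$. Writing each generator as $v=\sum_j q_j(x)e^{(\lambda_r-j)x}$ and reconstructing $\Phi_r$ as above turns each such chain into a set of the form \eqref{3.3}, which is exactly the asserted structure of $K_r$.

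For the converse, suppose $\ker P=\bigoplus_r K_r$ with each $K_r$ a union of chains \eqref{3.3}. Each chain is $\mathcal{D}$-cyclic, hence $\mathcal{D}$-invariant, hence $T$-invariant, so $\ker P$ is $T$-invariant. Conjugating $P$ by the shift produces a monic operator of the same order with the same kernel, so it equals $P$; thus the coefficients of $P$ are $2\pi i$-periodic. Combined with the fact, established in the proof of the Proposition, that they are rational in $x$ and in finitely many $e^{\lambda x}$, periodicity forces them to be rational in $e^x$ alone: on the cylinder $\mathbb{C}/2\pi i\mathbb{Z}$ with coordinate $w=e^x$ such a function is meromorphic with finitely many poles and controlled behaviour at $w=0,\infty$, hence lies in $\mathbb{C}(e^x)$. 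Finally $Q$ is obtained from $QP=h(\partial)$ by left Euclidean division in the differential field $\mathbb{C}(e^x)$, so $Q\in\mathbb{C}(e^x)[\partial]$ as well, and the transform is trigonometric.

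The step I expect to require the most care is this converse of Lemma (i): passing from $T$-invariance of $\ker P$ to coefficients genuinely rational in $e^x$, rather than merely $2\pi i$-periodic exponential-polynomial expressions, and then transferring this regularity from $P$ to $Q$. The Jordan-chain decomposition itself is soft once $\mathcal{D}$ and the relation $T|_{W_r}=e^{2\pi i\lambda_r}e^{2\pi i\mathcal{D}}$ are in place, and the reconstruction of the generating function $\Phi_r$ from a chain generator is routine; the analytic characterization of $\mathbb{C}(e^x)$ among periodic quasi-polynomials is where the genuine work sits.
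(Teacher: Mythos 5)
Your proof is correct, and while it rests on the same underlying mechanism as the paper's --- the $2\pi i$-shift invariance of $\ker P$ inherited from the periodicity of coefficients in $\mathbb{C}(e^x)$ --- the packaging is genuinely different and in one respect more complete. The paper obtains the chains \eqref{3.3} by observing that the identity \eqref{3.4}, valid for $l\in\mathbb{Z}$, extends to all $l\in\mathbb{C}$ (membership in the finite-dimensional $\ker P$ is a linear condition, polynomial in $l$) and then differentiating repeatedly in $l$ at $l=0$; your identity $T|_{W_r}=e^{2\pi i\lambda_r}e^{2\pi i\mathcal{D}}$ together with the finite nilpotent-logarithm series accomplishes exactly the same reduction, but it converts the conclusion into the statement that each $K_r=W_r\cap\ker P$ is $\mathcal{D}$-invariant, after which the existence of a basis that is a union of full chains \eqref{3.3} is precisely the existence of a Jordan basis for the nilpotent operator $\mathcal{D}|_{K_r}$ --- a step the paper leaves implicit (it only shows each chain generated by a kernel element lies in the kernel) and which your argument supplies explicitly, including the correct chain length $l_0+1$ since $\mathcal{D}^{l_0}v=l_0!\sum_j c_{r,l_0,j}e^{(\lambda_r-j)x}\neq 0$. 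For the converse, the two routes diverge more: the paper factors $e^{\lambda_{r_i}x}$ out of the $i$-th column of the Wronskian ratio \eqref{2.7} to place the coefficients of $P$ in $\mathbb{C}(x,e^x)$ \emph{before} invoking periodicity, whereupon rationality in $e^x$ alone follows easily (e.g.\ from the algebraic independence of $x$ and $e^x$: a rational $F(x,e^x)$ with $F(X+2\pi i,W)=F(X,W)$ as rational functions is constant in $X$), whereas you invoke the weaker output of the Proposition's proof (rationality in $x$ and finitely many $e^{\lambda x}$) and then argue analytically on the cylinder; your sketch there is the one soft spot, since "finitely many poles and controlled behaviour at $w=0,\infty$" requires the exponential-polynomial structure of the denominators to rule out poles accumulating at the punctures, and you would do better to import the paper's Wronskian factoring (available to you, since your chain basis has each $\phi_i$ in a single $W_{r_i}$) and reduce to the two-variable case. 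That said, you correctly flag this as the delicate step, and the paper itself asserts "hence purely rational in $e^x$" with no more justification than you give; your remaining steps --- coefficients periodic because the shift-conjugate of $P$ is a monic operator of the same order with the same kernel, and $Q\in\mathbb{C}(e^x)[\partial]$ by uniqueness of quotient in the Ore division $h(\partial)=QP$ over the differential field $\mathbb{C}(e^x)$ --- are sound and match the paper's implicit reasoning.
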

\begin{proof}\footnote{The arguments used in the proof are analogous to
Lemmas 2.8 and 2.9 in \cite{BHY2}, see also \cite{CN} where conditions similar
to \eqref{3.3} can be found, without proof.} Let us assume that the Darboux
transform is trigonometric. From Lemma 3.2, we can choose a basis
$\{\phi_1,\ldots,\phi_K\}$ of the kernel of $P$, such that each $\phi_i$
belongs to some $W_{r_i}, r_i\in\{1,\ldots,n\}$. Any $\phi\in W_r$ can be
expanded as
\begin{equation*}
\phi(x)=\sum_{j=0}^{n_r}\sum_{k=0}^{m_{r,j}-1}c_{r,k,j}x^ke^{(\lambda_r-j)x}.
\end{equation*}
Since the coefficients of $P$ are rational functions of $e^x$, they are
$2\pi i$ periodic, hence, for every $l\in\mathbb{Z}$, we have
\begin{equation} \label{3.4}
e^{-2\pi il\lambda_r}\phi(x+2\pi il)=\sum_{j=0}^{n_r}\sum_{k=0}^{m_{r,j}-1}
c_{r,k,j}(x+2\pi il)^ke^{(\lambda_r-j)x}\;\in\mbox{ker}\;P.
\end{equation}
Obviously, \eqref{3.4} must also hold for every $l\in\mathbb{C}$.
Differentiating repeatitively this identity with respect to $l$ and putting
$l=0$, shows that the functions defined in \eqref{3.3} belong to the kernel of
$P$ too.

Conversely, if we can choose a basis $\phi_1,\ldots,\phi_K$ of the kernel of
$P$ with $\phi_i\in W_{r_i}$, then from the $i^{th}$ column of determinants in
the numerator and the denominator of \eqref{2.7}, we can factor off
$e^{\lambda_{r_i}x}$, which shows that the coefficients of the operator $P$
depend rationally on $x$ and $e^{x}$, i.e.
$P\in\mathbb{C}(x,e^x)[\partial]$. From \eqref{3.3}, it follows that
$\phi_1(x+2\pi i),\ldots,\phi_K(x+2\pi i)$ is again a basis of the kernel of
$P$. This shows that the coefficients of $P$ are $2\pi i$ periodic, hence,
they must be purely rational functions of $e^x$. The coefficients of the
operator $Q$ such that $QP=h(\partial)$ are then  automatically rational
functions of $e^x$ too, completing the proof.
\end{proof}

To specify completely the trigonometric Darboux transformation given a
factorization of \eqref{3.1} as in \eqref{2.3}, with
$P,Q\in\mathbb{C}(e^x)[\partial]$, we still need to determine the polynomials
$f(z)$ and $g(z)$ in \eqref{2.1} and \eqref{2.2}, for which there is some
arbitrariness. We fix the arbitrariness by observing that there is a unique
choice for the polynomial $f(z)$ such that
\begin{equation}\label{3.5}
\lim_{e^{x}\to\infty} \psi(x,z)e^{-xz}=1.
\end{equation}
By an argument similar to the one given in \cite{W1} (see Lemma 6.1 in
\cite{W1}, where this condition is imposed when $\psi(x,z)$ is a rational
function of $x$ instead of $e^x$), one can show that by normalizing the basis
$\phi_1,\ldots,\phi_K$ of the kernel of $P$, as described in Theorem 3.3, so
that $\phi_i\in W_{r_i}$ and
\begin{equation*}
\phi_i(x)=x^{k_i}e^{(\lambda_{r_i}-j_i)x}+(\mbox{terms involving only}\;x^k
e^{(\lambda_{r_i}-j)x}\;\mbox{with}\;j>j_i),
\end{equation*}
one must pick
\begin{equation}\label{3.6}
f(z)=\prod_{i=1}^K(z-\lambda_{r_i}+j_i).
\end{equation}
\begin{definition} The trigonometric Grassmannian $Gr^{trig}\subset Gr^{rat}$
is defined to be the set of spaces $V \in Gr^{rat}$ whose stationary wave
function $\psi_V(x,z)$ is obtained by a trigonometric Darboux transform of
$e^{xz}$, with the normalization of $f(z)$ specified as in \eqref{3.6}.
\end{definition}
%%%%%%%%%%%%%%%%%%%%%%%%%%%%%%%%%%%%%%%%%%%%%%%%%%%%%%%%%%%%%%%%%%%%%%%%%%%%%%%%%%%%%%%%%%%%%%%%%%%%%%%%% Section 4
\section{Bispectrality and Calogero-Moser matrices}
Let $T$ be the shift operator acting on functions of $z$ by
\begin{equation*}
Tf(z)=f(z+1).
\end{equation*}
The pair of equations
\begin{align*}
\partial e^{xz}&=ze^{xz}\\
Te^{xz}&=e^xe^{xz},
\end{align*}
defines an anti-isomorphism between the algebra of differential operators
whose coefficients are polynomials  in $e^{x}$ and the algebra of (positive)
difference operators whose coefficients are polynomials in $z$
\begin{equation*}
b:\mathbb{C}[e^x][\partial]\to \mathbb{C}[z][T],
\end{equation*}
with
\begin{equation}\label{4.1}
b(e^x)=T\quad \mbox{and}\quad b(\partial)=z.
\end{equation}

If $\psi(x,z)$ is a trigonometric Darboux transform of $e^{xz}$, we can write
the operators $P$ and $Q$ in \eqref{2.1} and \eqref{2.2} as
\begin{equation}\label{4.2}
P=\frac{1}{\theta(e^x)}\overline{P}\quad \mbox{and}\quad
Q=\overline{Q}\frac{1}{\nu(e^x)}\quad\mbox{with}\;
\overline{P},\overline{Q}\in\mathbb{C}[e^x][\partial],
\end{equation}
and $\theta(e^x), \nu(e^{x})$ some polynomials in $e^x$.
\begin{proposition} Let $\psi(x,z)$ be a trigonometric Darboux transform of
$e^{xz}$. Then
\begin{align}
\psi(x,z)&=\frac{1}{\theta(e^x)}\frac{1}{f(z)}b(\overline{P})e^{xz}
                                              \label{4.3}\\
e^{xz}&=\frac{1}{\nu(e^x)}b(\overline{Q})\frac{1}{g(z)}\psi(x,z),\label{4.4}
\end{align}
with $\overline{P},\overline{Q}$ as in \eqref{4.2}. As a consequence,
$\psi(x,z)$ in addition to be an eigenfunction of a differential operator in
$x$ as in \eqref{2.4}, is also an eigenfunction of a difference operator in $z$
\begin{equation*}
f(z)^{-1}b(\overline{P})b(\overline{Q})g(z)^{-1}\psi(x,z)=
\theta(e^x)\nu(e^x)\psi(x,z),
\end{equation*}
i.e. $\psi(x,z)$ solves a differential-difference bispectral problem.
\end{proposition}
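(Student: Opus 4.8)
The plan is to reduce both \eqref{4.3} and \eqref{4.4} to the single algebraic fact that the anti-isomorphism $b$ is engineered so that, acting on the exponential $e^{xz}$, the pair $(\partial,e^x)$ behaves exactly like the pair $(z,T)$. The first step is to establish the key lemma that for every $A\in\mathbb{C}[e^x][\partial]$ one has
\begin{equation*}
A\,e^{xz}=b(A)\,e^{xz}.
\end{equation*}
By linearity it suffices to check this on a normally ordered monomial $A=(e^x)^i\partial^k$: the left-hand side is $(e^x)^iz^ke^{xz}$, while $b(A)=z^kT^i$ by the anti-homomorphism property \eqref{4.1}, and $z^kT^ie^{xz}=z^k(e^x)^ie^{xz}$ since $T$ shifts $z\mapsto z+1$. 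The two agree because $(e^x)^i$ and $z^k$ are scalar multiplications that commute. Given this, \eqref{4.3} is immediate: writing $P=\frac{1}{\theta(e^x)}\overline P$ and moving the $x$-function $\frac{1}{\theta(e^x)}$ past the $z$-function $\frac{1}{f(z)}$ gives
\begin{equation*}
\psi=\frac{1}{f(z)}Pe^{xz}=\frac{1}{\theta(e^x)}\frac{1}{f(z)}\overline Pe^{xz}=\frac{1}{\theta(e^x)}\frac{1}{f(z)}b(\overline P)e^{xz},
\end{equation*}
the last equality being the lemma.

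Equation \eqref{4.4} is the real work, because there $\overline Q$ is applied not to $e^{xz}$ but to $\frac{1}{\nu(e^x)}\psi$, so the lemma does not apply directly (indeed $A\chi=b(A)\chi$ fails for a general function $\chi$, as one sees already with $A=e^x$). The plan is to substitute \eqref{4.3}, use $f(z)g(z)=h(z)$ to write $\frac{1}{g(z)}\psi=\frac{1}{\theta(e^x)h(z)}b(\overline P)e^{xz}$, and pull the $x$-function $\frac{1}{\theta(e^x)}$ out through $b(\overline Q)$, which is legitimate since $b(\overline Q)$ only shifts and multiplies in $z$. This reduces the whole identity to the operator statement
\begin{equation*}
b(\overline Q)\,\frac{1}{h(z)}\,b(\overline P)=\mu(T),\qquad \mu(e^x):=\theta(e^x)\nu(e^x),
\end{equation*}
after which the lemma yields $\mu(T)e^{xz}=\mu(e^x)e^{xz}$ and the prefactor $\frac{1}{\theta(e^x)\nu(e^x)}$ cancels it to leave $e^{xz}$.

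To prove the displayed operator identity I would work on the $x$-side. From $P=\frac{1}{\theta(e^x)}\overline P$ and $Q=\overline Q\frac{1}{\nu(e^x)}$ one obtains, as multiplication operators, $\overline P\,P^{-1}=\theta(e^x)$ and $Q^{-1}\overline Q=\nu(e^x)$; hence, using $h(\partial)^{-1}=P^{-1}Q^{-1}$ from \eqref{2.3},
\begin{equation*}
\overline P\,\frac{1}{h(\partial)}\,\overline Q=(\overline P\,P^{-1})(Q^{-1}\overline Q)=\theta(e^x)\nu(e^x)=\mu(e^x).
\end{equation*}
Applying $b$, which is an anti-homomorphism with $b(h(\partial)^{-1})=h(z)^{-1}$ and $b(\mu(e^x))=\mu(T)$, transports this to the required $z$-side identity. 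The main obstacle is exactly this transport: $b$ is defined only on $\mathbb{C}[e^x][\partial]$, whereas the factorization involves the inverses $h(\partial)^{-1},P^{-1},Q^{-1}$, which live in the ring of pseudo-differential operators. I would handle it by recalling that $b$ extends to an anti-isomorphism of the respective completions (pseudo-differential operators in $x$ onto pseudo-difference operators in $z$), under which $b(A^{-1})=b(A)^{-1}$; one must also keep the noncommutative order straight, since $\theta(e^x)$ does not commute with $\overline P^{-1}$, which is precisely why the grouping $\overline P\,P^{-1}$ (rather than $P^{-1}\overline P$) collapses to the clean multiplication by $\theta(e^x)$.

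Finally, the bispectral eigenvalue equation follows by composing the two results. Rewriting \eqref{4.4} as $b(\overline Q)\frac{1}{g(z)}\psi=\nu(e^x)e^{xz}$ and applying $\frac{1}{f(z)}b(\overline P)$ to both sides, I commute the $z$-independent factor $\nu(e^x)$ through the difference operator $b(\overline P)$ and then use \eqref{4.3} in the form $\frac{1}{f(z)}b(\overline P)e^{xz}=\theta(e^x)\psi$ to get
\begin{equation*}
\frac{1}{f(z)}b(\overline P)b(\overline Q)\frac{1}{g(z)}\psi=\nu(e^x)\frac{1}{f(z)}b(\overline P)e^{xz}=\theta(e^x)\nu(e^x)\psi,
\end{equation*}
exhibiting $\psi(x,z)$ as an eigenfunction of a difference operator in $z$ with eigenvalue $\theta(e^x)\nu(e^x)$, which together with \eqref{2.4} establishes the differential-difference bispectrality.
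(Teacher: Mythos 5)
Your proposal is correct and is essentially the paper's own argument: \eqref{4.3} follows from the observation that $A\,e^{xz}=b(A)\,e^{xz}$ for every $A\in\mathbb{C}[e^x][\partial]$, and \eqref{4.4} from transporting the factorization \eqref{2.3}, rewritten via \eqref{4.2}, through the anti-isomorphism $b$. The only difference is a cosmetic regrouping: the paper applies $b$ to $\overline{Q}\,\nu(e^x)^{-1}\theta(e^x)^{-1}\overline{P}=f(\partial)g(\partial)$ to get $b(\overline{P})\theta(T)^{-1}\nu(T)^{-1}b(\overline{Q})=f(z)g(z)$ and then inverts $\nu(T)^{-1}b(\overline{Q})g(z)^{-1}$, whereas you apply it to the rearranged identity $\overline{P}\,h(\partial)^{-1}\overline{Q}=\theta(e^x)\nu(e^x)$, parking the formal inverses $P^{-1},Q^{-1},h(\partial)^{-1}$ on the pseudo-differential side so the difference side carries only the harmless rational factor $h(z)^{-1}$ --- the same extension-of-$b$ issue, which you flag explicitly and the paper leaves equally implicit.
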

\begin{proof} Equation \eqref{4.3} follows immediately from \eqref{2.1} and
\eqref{4.2}, using the definition of the bispectral map \eqref{4.1}. From
\eqref{2.3} and \eqref{4.2}, we have
\begin{equation*}
\overline{Q}\nu(e^x)^{-1}\theta(e^x)^{-1}\overline{P}=f(\partial)g(\partial),
\end{equation*}
implying
\begin{equation*}
b(\overline{P})\theta(T)^{-1}\nu(T)^{-1}b(\overline{Q})=f(z)g(z).
\end{equation*}
Hence
\begin{equation*}
\psi(x,z)=f(z)^{-1}b(\overline{P})\theta(T)^{-1}e^{xz}
=g(z)\big(\nu(T)^{-1}b(\overline{Q})\big)^{-1}e^{xz},
\end{equation*}
or, equivalently,
\begin{equation*}
e^{xz}=\nu(T)^{-1}b(\overline{Q})g(z)^{-1}\psi(x,z)\Leftrightarrow e^{xz}
=\nu(e^x)^{-1}b(\overline{Q})g(z)^{-1}\psi(x,z),
\end{equation*}
which establishes \eqref{4.4} and concludes the proof.
\end{proof}

Let us now assume that the trigonometric Darboux transformation has been
normalized as explained in \eqref{3.6}, or equivalently
$\psi(x,z)=\psi_V(x,z)$, for $V\in Gr^{trig}$, according to Definition 3.4.
Let us define
\begin{equation}\label{4.5}
\psi^b(n,z)=\psi_V(\mbox{log}(1+z),n).
\end{equation}
Because of the normalization, we deduce from \eqref{3.5} that
\begin{equation}\label{4.6}
\lim_{z\to\infty}\psi^b(n,z)(1+z)^{-n}=1.
\end{equation}
Moreover, putting
\begin{equation*}
\Delta=T-I,
\end{equation*}
after substituting $n$ for $z$ and $\mbox{log}(1+z)$ for $x$ in \eqref{4.3}
and \eqref{4.4}, it follows that
\begin{align}
\psi^b(n,z)&=\frac{1}{\theta(1+z)}R(n,\Delta)(1+z)^n\label{4.7}\\
(1+z)^n&=\frac{1}{\nu(1+z)}S(n,\Delta)\psi^b(n,z),\label{4.8}
\end{align}
with $R(n,\Delta)$ and $S(n,\Delta)$ some (positive) difference operators in
$\Delta$ (acting on functions depending on $n$), whose coefficients are
rational functions of $n$. We introduce the following definition.
\begin{definition} i) A function  $\psi(n,z)$ which satisfies \eqref{4.7} and
\eqref{4.8}, for some monic difference operators $R(n,\Delta), S(n,\Delta)$,
with $\theta(z)$ and $\nu(z)$ monic polynomials in $z$ such that the order of
$R$ is equal to the degree of $\theta$, will be called a discrete Darboux
transform of $(1+z)^n$.

ii) A discrete Darboux transform $\psi(n,z)$ of $(1+z)^n$, will be called
polynomial when the coefficients of $R(n,\Delta)$ and $S(n,\Delta)$ are
rational functions of $n$.
\end{definition}

From Wilson's result \cite{W2} for a space $\tilde{V}\in Gr^{ad}$, there
exists a Calogero-Moser pair $(\tilde{X},\tilde{Z})\in C_N$ such that
the corresponding tau-function is given by
\begin{equation}\label{4.9}
\tau_{\tilde{V}}(t_1,t_2,t_3,\ldots)
=\mbox{det}\Big\{\tilde{X}-\sum_{k=1}^{\infty}kt_k\tilde{Z}^{k-1}\Big\}.
\end{equation}
From this formula and \eqref{1.5} it follows that
\begin{equation*}
\tau_{\tilde{V}}(n,t_1,t_2,\dots)=
\tau_{\tilde{V}}\Big(t_1+n,t_2-\frac{n}{2},t_3+\frac{n}{3},\ldots\Big),
\end{equation*}
is a tau-function of the discrete KP hierarchy
\begin{equation*}
\frac{\partial L}{\partial t_i}=[(L^i)_+,L],\quad
L=\Delta+\sum_{j=0}^{\infty}a_j(n,t_1, t_2,\ldots)\Delta^{-j},
\end{equation*}
with $(L^i)_{+}$ the (positive) difference part of $L^i$.
The corresponding wave function (of the discrete KP hierarchy) is
\begin{multline} \label{4.10}
\psi_{\tilde{V}}(n,t,z)
=(1+z)^n\exp\Big(\sum_{k=1}^{\infty} t_k z^k\Big)\times\\
\frac{\tau_{\tilde{V}}\big(n,t_1-\frac{1}{z},t_2-\frac{1}{2z^2},
t_3-\frac{1}{3z^3},\ldots\big)}
{\tau_{\tilde{V}}\big(n,t_1,t_2,t_3,\ldots\big)}.
\end{multline}

A simple computation using \eqref{4.9} shows that
\begin{equation}\label{4.11}
\tau_{\tilde{V}}(n,t_1,t_2,\dots)=
\mbox{det}\Big\{\tilde{X}
-\sum_{k=1}^{\infty}kt_k\tilde{Z}^{k-1}-n(I+\tilde{Z})^{-1}\Big\}.
\end{equation}
In \cite{HI} it was assumed that the eigenvalues of $\tilde{Z}$ are inside
the unit circle, but clearly the right-hand side of \eqref{4.11} is
well defined as long as $-1$ is not an eigenvalue of $\tilde{Z}$.
Thus, analytic continuation shows that the above formula can be applied
when $\det(I+\tilde{Z})\neq0$.

If we denote by $\psi_{\tilde{V}}(n,z)=\psi_{\tilde{V}}(n,0,z)$, the
corresponding stationary wave function, then from \eqref{4.10} and
\eqref{4.11} it follows immediately that
\begin{equation}\label{4.12}
\lim_{n\rightarrow\infty}\psi_{\tilde{V}}(n,z)(1+z)^{-n}=1.
\end{equation}
It was shown in \cite{HI} that $\psi_{\tilde{V}}(n,z)$ is a polynomial
discrete Darboux transform of $(1+z)^n$. In fact, any polynomial discrete
Darboux transform of $(1+z)^n$ is obtained by the construction above, up to
a normalization, which can be fixed by imposing \eqref{4.12}. The result is
summarized in the next theorem, of which we sketch the idea of the proof.

\begin{theorem}
A function $\psi(n,z)$ is a polynomial discrete Darboux transform of
$(1+z)^n$ if and only if
$$\psi(n,z)=\frac{\theta_1(z)}{\theta_2(z)}\psi_{\tilde{V}}(n,z),$$
where $\theta_1(z)$ and  $\theta_2(z)$ are monic polynomials of the same
degree, and $\psi_{\tilde{V}}(n,z)$ is a stationary wave function of the
discrete KP hierarchy as in \eqref{4.10}-\eqref{4.11}, built from a space
$\tilde{V}\in Gr^{ad}$ corresponding to a Calogero-Moser pair
$(\tilde{X},\tilde{Z})$ such that $\det(I+\tilde{Z})\neq 0$.
\end{theorem}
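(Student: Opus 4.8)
The plan is to deduce both implications from a single gauge computation together with the classification recalled above from \cite{HI}. Throughout I use the elementary identity $\Delta^k(1+z)^n=z^k(1+z)^n$, so that $p(\Delta)(1+z)^n=p(z)(1+z)^n$ for every constant-coefficient operator $p(\Delta)$, and the fact that a difference operator in $\Delta$ whose coefficients depend only on $n$ commutes with multiplication by any function of $z$. I also record that any polynomial discrete Darboux transform can be written as $\psi(n,z)=\big(R(n,z)/\theta(z)\big)(1+z)^n$, where $R(n,z)=\sum_j r_j(n)z^j$ is the symbol of $R(n,\Delta)$, monic of degree $\deg\theta$ with coefficients rational in $n$.

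The heart of the argument is the gauge lemma: if $\psi$ is a polynomial discrete Darboux transform of $(1+z)^n$ and $\theta_1,\theta_2$ are monic polynomials of the same degree $e$, then $\Phi=(\theta_1/\theta_2)\psi$ is again one. For \eqref{4.7} I compute, using $\Phi=(\theta_1/\theta_2)\psi$,
\[
\theta_2(z)\theta(z)\Phi=\theta_1(z)\theta(z)\psi=\theta_1(z)R(n,\Delta)(1+z)^n=R(n,\Delta)\theta_1(\Delta)(1+z)^n,
\]
where the last step moves $\theta_1(z)$ through $R$ and converts it into $\theta_1(\Delta)$ on the vacuum; hence $\Phi$ obeys \eqref{4.7} with $\tilde\theta=\theta_2\theta$ and $\tilde R=R\,\theta_1(\Delta)$, both monic and with $\mathrm{ord}\,\tilde R=\deg\theta+e=\deg\tilde\theta$. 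For \eqref{4.8} I first deduce from it and $\psi=(\theta_2/\theta_1)\Phi$ that $S(n,\Delta)\Phi=(\theta_1\nu/\theta_2)(1+z)^n$, and then apply the constant-coefficient operator $\theta_2(\Delta)$: the factor $\theta_1\nu/\theta_2$ depends only on $z$ and so commutes through $\theta_2(\Delta)$, while $\theta_2(\Delta)(1+z)^n=\theta_2(z)(1+z)^n$ cancels the denominator, giving
\[
\theta_2(\Delta)S(n,\Delta)\Phi=\theta_1(z)\nu(z)(1+z)^n.
\]
Thus $\Phi$ obeys \eqref{4.8} with $\tilde S=\theta_2(\Delta)S$ and $\tilde\nu=\theta_1\nu$, and all coefficients remain rational in $n$; this proves the lemma.

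The "if" direction is then immediate: $\psi_{\tilde V}$ is a polynomial discrete Darboux transform by \cite{HI}, and the gauge lemma shows that multiplying it by $\theta_1/\theta_2$ preserves this property. For the "only if" direction I invoke \cite{HI} in the form recalled above: an arbitrary polynomial discrete Darboux transform $\psi$ equals $\rho(z)\psi_{\tilde V}$ for some $n$-independent factor $\rho(z)$ and some $\tilde V\in Gr^{ad}$ with $\det(I+\tilde Z)\neq0$, normalized by \eqref{4.12}. Writing $\psi=\big(R(n,z)/\theta(z)\big)(1+z)^n$ and $\psi_{\tilde V}=\big(R_{\tilde V}(n,z)/\theta_{\tilde V}(z)\big)(1+z)^n$ gives $\rho(z)=R(n,z)\theta_{\tilde V}(z)/\big(\theta(z)R_{\tilde V}(n,z)\big)$; since $\rho$ is independent of $n$ I may let $n\to\infty$, and because $R_{\tilde V}(n,z)\to\theta_{\tilde V}(z)$ by \eqref{4.12} this forces $\theta_1(z):=\lim_{n\to\infty}R(n,z)$ to be a monic polynomial of degree $\deg\theta$ with $\rho(z)=\theta_1(z)/\theta(z)$. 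Setting $\theta_2=\theta$ yields $\psi=(\theta_1/\theta_2)\psi_{\tilde V}$ with $\theta_1,\theta_2$ monic of equal degree, as required.

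The step I expect to be the main obstacle is the treatment of the second equation \eqref{4.8} in the gauge lemma. The naive attempt to multiply \eqref{4.8} by $\theta_2(z)$ strands that polynomial outside the difference operator and yields no operator acting on $\Phi$; the correct device is to apply the constant-coefficient operator $\theta_2(\Delta)$, which works precisely because the spurious factor $\theta_1\nu/\theta_2$ is $n$-independent and is reproduced exactly by $\theta_2(\Delta)(1+z)^n=\theta_2(z)(1+z)^n$ on the vacuum. A secondary point needing care, in the "only if" direction, is that the normalization factor is genuinely a ratio of monic polynomials of equal degree; this follows from $R$ being monic of order $\deg\theta$ together with the $n$-independence of $\rho$, which forces the coefficients of $R(n,z)$ to converge as $n\to\infty$.
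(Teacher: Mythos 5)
Your gauge lemma is correct, and it handles the easy half of the theorem well: the identity $\theta_1(z)R(n,\Delta)(1+z)^n=R(n,\Delta)\theta_1(\Delta)(1+z)^n$ and the device of applying the constant-coefficient operator $\theta_2(\Delta)$ to repair equation \eqref{4.8} are both sound (modulo the harmless discrepancy that \eqref{4.7}--\eqref{4.8} carry $\theta(1+z)$, $\nu(1+z)$ rather than $\theta(z)$, $\nu(z)$), so multiplication by $\theta_1/\theta_2$ does preserve the class of polynomial discrete Darboux transforms. Combined with the fact --- which the paper does attribute to \cite{HI} --- that $\psi_{\tilde V}(n,z)$ itself is a polynomial discrete Darboux transform, this settles the ``if'' direction. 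Your bookkeeping in the other direction, showing via \eqref{4.12} and the $n$-independence of $\rho(z)$ that the normalization factor must be a ratio of monic polynomials of equal degree, is also fine as far as it goes.

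The genuine gap is that your ``only if'' direction is circular. The statement you invoke from \cite{HI} --- that an \emph{arbitrary} polynomial discrete Darboux transform of $(1+z)^n$ equals $\rho(z)\psi_{\tilde V}(n,z)$ for some $\tilde V\in Gr^{ad}$ with $\det(I+\tilde Z)\neq 0$ --- is precisely the substance of Theorem 4.3, not a quotable prior result. What the paper credits to \cite{HI} is only the forward implication; indeed, even the Calogero--Moser formula \eqref{4.11} is obtained in \cite{HI} under the assumption that the eigenvalues of $\tilde Z$ lie inside the unit circle, and its extension to $\det(I+\tilde Z)\neq 0$ by analytic continuation is part of the present paper. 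The paper's own proof supplies exactly what your proposal omits: one first shows that the two equations \eqref{4.7}--\eqref{4.8}, with coefficients rational in $n$, force the kernel of $R$ (up to a factor independent of $n$) to have a basis of quasi-polynomials $\phi_j(n)=p_j(n)(\lambda_j+1)^n$ with $\lambda_j\in\mathbb{C}\setminus\{-1\}$ --- the discrete analogue of the kernel analysis behind Theorem 3.3 --- and one then \emph{constructs} the adelic space $\tilde V$ as the Darboux transform of $e^{xz}$ whose operator $P$ in \eqref{2.1} has kernel spanned by $f_j(x)=p_j\big((1+z)\partial_z\big)e^{xz}\vert_{z=\lambda_j}$, the condition $\det(I+\tilde Z)\neq 0$ encoding $\lambda_j\neq -1$. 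Without some version of this classification your argument never produces the space $\tilde V$ from a given $\psi(n,z)$, so the hard implication remains unproved; only the ``up to a normalization'' refinement is actually established by your proposal.
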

The proof of the above theorem can be briefly explained as follows. First we
show that, up to a factor independent of $n$, polynomial discrete Darboux
transforms of $(1+z)^n$ can be characterized by the fact that the kernel of
the operator $R$ in \eqref{4.7}
has a basis consisting of functions of the form
$$\phi_j(n)=p_j(n)(\lambda_j+1)^n, \text{ where }
\lambda_j\in\mathbb{C}\setminus\{-1\},$$
and $p_j(n)$ are polynomials of $n$. The space $\tilde{V}\in Gr^{ad}$
corresponds to a Darboux transform of $e^{xz}$ (in the sense of
Definition~2.1) such that the polynomial $P(x,\partial)$ in \eqref{2.1} has a
kernel spanned by the functions
$$f_j(x)=p_j\big((1+z)\partial_z\big) e^{xz}|_{z=\lambda_j}.$$
The condition $\det(I+\tilde{Z})\neq 0$ in the theorem reflects the fact that
$\lambda_j\neq -1$.\\

In this note, we just like to explain how to deduce from this result a
parametrization of the trigonometric Grassmannian $Gr^{trig}$ in terms of
trigonometric Calogero-Moser matrices as defined in \eqref{1.4}.
We recall that Gekhtman and Kasman (see \cite{KG}, Corollary 3.2) have
established that for any triple $(X,Y,Z)$ of $N\times N$ matrices such that
rank $(XZ-YX)=1$, the function
\begin{equation*}
\tau_{(X,Y,Z)}(t_1,t_2,\ldots)=\mbox{det}
\Big\{I-X\exp\big\{-\sum_{k=1}^{\infty}t_k Z^k\big\}
\exp\big\{\sum_{k=1}^{\infty}t_k Y^k\big\}\Big\},
\end{equation*}
is a tau-function of the KP hierarchy, corresponding to some space of
$Gr^{rat}$. The next theorem shows that the special choice $Y=Z-I$ in their
formula, with $X$ invertible, characterizes tau-functions of spaces
of $Gr^{trig}$.
\begin{theorem} There is a one-to-one correspondence between trigonometric
Calogero-Moser pairs $(X,Z)$ (modulo conjugation) as defined in \eqref{1.4},
and tau-functions of spaces $V\in Gr^{trig}$, which is given by
\begin{equation}\label{4.13}
\tau_V(t_1,t_2,\ldots)=\emph{det} \Big\{I-X\emph{exp}
\Big\{\sum_{k=1}^{\infty}t_k\big((Z-I)^k-Z^k\big)\Big\}\Big\}.
\end{equation}
\end{theorem}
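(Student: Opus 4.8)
The plan is to read \eqref{4.13} as the Gekhtman--Kasman formula of \cite{KG} for the special triple $(X,Y,Z)$ with $Y=Z-I$ and $X$ invertible, and to transport the known parametrization of $Gr^{ad}$ by $C_N$ through the bispectral map $b$ of Section~4. The first point is purely algebraic: for $Y=Z-I$ one has $XZ-YX=XZ-ZX+X=(XZX^{-1}-Z+I)X$, so, $X$ being invertible, $\mbox{rank}(XZ-YX)=\mbox{rank}(XZX^{-1}-Z+I)$. Hence the Gekhtman--Kasman rank condition $\mbox{rank}(XZ-YX)=1$ is \emph{exactly} the trigonometric Calogero--Moser condition \eqref{1.4}, and because $Z$ and $Z-I$ commute their two exponentials collapse to $\exp\{\sum_k t_k((Z-I)^k-Z^k)\}$. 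Thus for every $(X,Z)\in C_N^{trig}$ the right-hand side of \eqref{4.13} is a bona fide tau-function of a space $V\in Gr^{rat}$.

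Next I would show that this $V$ actually lies in $Gr^{trig}$ by computing its stationary wave function directly from \eqref{4.13}. Since $(Z-I)-Z=-I$, the denominator is $\tau_V(x,0,\dots)=\det(I-e^{-x}X)$, while at $t=(x-\tfrac1z,-\tfrac{1}{2z^2},\dots)$ the matrix logarithm gives $\exp\{\sum_k t_k((Z-I)^k-Z^k)\}=e^{-x}\big(I-\tfrac{Z-I}{z}\big)\big(I-\tfrac{Z}{z}\big)^{-1}=e^{-x}\big(I+(zI-Z)^{-1}\big)$. A short determinantal simplification then yields $\psi_V(x,z)=e^{xz}\det\{I-X(e^xI-X)^{-1}(zI-Z)^{-1}\}$, which is \eqref{1.3}; this is manifestly a rational function of $e^x$, so $\psi_V$ is a trigonometric Darboux transform, and $\lim_{e^x\to\infty}\psi_V e^{-xz}=1$ supplies the normalization \eqref{3.5}--\eqref{3.6}. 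Hence $V\in Gr^{trig}$ and the map $(X,Z)\mapsto\tau_V$ is well defined.

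For the converse and for surjectivity I would run the machinery of Section~4 in reverse. Starting from $V\in Gr^{trig}$, the function $\psi^b(n,z)=\psi_V(\log(1+z),n)$ of \eqref{4.5} is a polynomial discrete Darboux transform of $(1+z)^n$ by Proposition~4.1 and \eqref{4.7}--\eqref{4.8}; Theorem~4.2 then writes it, up to a factor $\theta_1(z)/\theta_2(z)$, in terms of an adelic $\tilde V\in Gr^{ad}$ with Calogero--Moser pair $(\tilde X,\tilde Z)\in C_N$, $\det(I+\tilde Z)\neq0$. Using Wilson's formula \eqref{4.9}, its discrete reduction \eqref{4.11}, and the substitution $\psi_V(x,z)=\psi^b(z,e^x-1)$ (so that $e^x-1$ plays the role of the discrete spectral variable and $z$ that of the lattice variable $n$, whence $(1+\zeta)^n\mapsto e^{xz}$), I would match the two determinantal expressions for the wave function and read off the change of variables $X=I+\tilde Z^t$, $Z=\tilde X^t(I+\tilde Z^t)$, that is, exactly the bispectral involution $(X,Z)\mapsto(Z^t,X^t)$ recalled in the Introduction.

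The hard part is verifying that this change of variables does what is claimed. Determinant matching alone is \emph{blind to transposition}, so the naive identification $\tilde Z=X-I$, $\tilde X=ZX^{-1}$ reproduces the wave function yet carries the rank-one condition to the wrong place (it yields $\mbox{rank}(XZX^{-1}-Z-I)=1$, which for $N\geq3$ is generically full rank for the correct $+I$ expression); one must instead follow the genuine Calogero--Moser matrices through the bispectral involution. The point that makes everything fit is the identity $[\tilde X,\tilde Z]+I=(XZX^{-1}-Z+I)^t$, which I would establish directly from $\tilde X=(ZX^{-1})^t=(X^t)^{-1}Z^t$ and $\tilde Z=X^t-I$, giving $[\tilde X,\tilde Z]+I=(X^t)^{-1}Z^tX^t-Z^t+I$; it converts $\mbox{rank}([\tilde X,\tilde Z]+I)=1$ (the condition defining $C_N$) into $\mbox{rank}(XZX^{-1}-Z+I)=1$ (the condition \eqref{1.4}), and back. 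Finally I would check that the spurious factor $\theta_1/\theta_2$ becomes trivial once the normalization \eqref{3.6} is imposed, being absorbed by the prefactors $\theta(e^x),\nu(e^x)$ of \eqref{4.2}, so that the discrete tau-function transported through $b$ is precisely \eqref{4.13}. Bijectivity then follows formally: Wilson's correspondence $C_N\leftrightarrow Gr^{ad}$ restricts to a bijection on $\{\det(I+\tilde Z)\neq0\}$, the map $b$ is an involution, and the transpose change of variables $(\tilde X,\tilde Z)\leftrightarrow(X,Z)$ is invertible, so $C_N^{trig}/\mbox{conj}$ matches the tau-functions of $Gr^{trig}$ one-to-one.
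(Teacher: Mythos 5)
Your proposal is correct and follows essentially the same route as the paper: reduce to the discrete picture via $\psi^b(n,z)=\psi_V(\log(1+z),n)$, invoke the polynomial discrete Darboux theorem to get an adelic pair $(\tilde X,\tilde Z)$ with $\det(I+\tilde Z)\neq 0$, pass through the transposed change of variables $X=I+\tilde Z^t$, $Z=\tilde X^t(I+\tilde Z^t)$ (your identity $[\tilde X,\tilde Z]+I=(XZX^{-1}-Z+I)^t$ is exactly the paper's chain of rank equalities), and confirm \eqref{4.13} by the Sato-formula computation. Your explicit forward verification via the Gekhtman--Kasman formula, and your warning that determinant matching alone is blind to transposition, are sound refinements of what the paper states more tersely; the only imprecision is that the spurious factor $\theta_1/\theta_2$ is eliminated by matching the $n\to\infty$ normalization \eqref{4.12} against the corresponding limit for $\psi^b$, not by absorption into the prefactors of \eqref{4.2}.
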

\begin{proof} Let $\psi_V(x,z)$ be the stationary wave function of a space
$V\in Gr^{trig}$. From \eqref{4.6}, \eqref{4.7} and \eqref{4.8}, it follows
that $\psi^b(n,z)$ as defined in \eqref{4.5} is a polynomial Darboux transform
of $(1+z)^n$ such that
$$\lim_{n\rightarrow\infty}\psi^b(n,z)(1+z)^{-n}=1.$$
From Theorem 4.3 and \eqref{4.12}, there exists a space $\tilde{V}\in Gr^{ad}$
and a pair of matrices $(\tilde{X},\tilde{Z})$
such that $\psi^b(n,z)=\psi_{\tilde{V}}(n,z)$ can be computed via formulae
\eqref{4.10}-\eqref{4.11}.
By an easy computation we obtain
\begin{equation*}
\psi^b(n,z)=(1+z)^n\mbox{det}\Big\{I+\big(\tilde{X}
-n(I+\tilde{Z})^{-1}\big)^{-1}\big(zI-\tilde{Z}\big)^{-1}\Big\}.
\end{equation*}

Hence, by the definition of $\psi^b(n,z)$ in \eqref{4.5},
\begin{align*}
\psi_V(x,z)&=\psi^b(z,e^x-1)\nonumber\\
&=e^{xz}\mbox{det}\Big\{I+\big(\tilde{X}-z(I+\tilde{Z})^{-1}\big)^{-1}
\big((e^x-1)I-\tilde{Z}\big)^{-1}\Big\}.
\end{align*}
Defining
\begin{equation}\label{4.14}
X=I+\tilde{Z}^t,\quad Z=\tilde{X}^t(I+\tilde{Z}^t),
\end{equation}
since a determinant is invariant by transposition, we obtain
\begin{align}
\psi_V(x,z)&=e^{xz}\mbox{det}\Big\{I+\big(e^xI-(I+\tilde{Z}^t)\big)^{-1}
\big(\tilde{X}^t-z(I+\tilde{Z}^t)^{-1}\big)^{-1}\Big\}\nonumber\\
&=e^{xz}\mbox{det}\Big\{I-X(e^xI-X)^{-1}(zI-Z)^{-1}\Big\}\label{4.15},
\end{align}
where in the last equation, we have used that the multiplication of the
matrices $X$ and $(e^xI-X)^{-1}$ commutes.
Since $(\tilde{X},\tilde{Z})$ is a Calogero-Moser pair, we deduce that
\begin{multline*}
\mbox{rank}\;(XZX^{-1}-Z+I)=\mbox{rank}\;([X,ZX^{-1}]+I)\\=\mbox{rank}\;
([\tilde{Z}^t,\tilde{X}^t]+I)
=\mbox{rank}\;([\tilde{X},\tilde{Z}]+I)=1,
\end{multline*}
showing that $(X,Z)$ in \eqref{4.14} is a trigonometric Calogero-Moser pair.

On the other hand, denoting for short by $\exp\{\ldots\}$ the expression that
appears inside the exponential in \eqref{4.13}, one computes
\begin{align*}
&\tau_V\Big(t_1-\frac{1}{z},t_2-\frac{1}{2z^2},t_3-\frac{1}{3z^3},\ldots\Big)\\
&=\mbox{det}
\Big\{I-X\exp\{\ldots\}\big(zI-(Z-I)\big)\big(zI-Z\big)^{-1}\Big\}\\
&=\mbox{det}\Big\{I-X\exp\{\ldots\}-X\exp\{\ldots\}\big(zI-Z\big)^{-1}\Big\},
\end{align*}
from which it follows that
\begin{multline*}
\frac{\tau_V\big(t_1-\frac{1}{z},t_2-\frac{1}{2z^2},t_3-\frac{1}{3z^3},
\ldots\big)}
{\tau_V(t_1,t_2,t_3,\ldots)}\\
=\mbox{det}\Big\{I-X\big(\exp^{-1}\{\ldots\}
-X\big)^{-1}\big(zI-Z\big)^{-1}\Big\}.
\end{multline*}
Putting $t_1=x, t_2=t_3=\ldots=0$ in this formula, shows that $\psi_V(x,z)$
in \eqref{4.15} satisfies Sato's formula, with $\tau_V$ as in \eqref{4.13}.
Since this formula determines the tau-function up to a constant, the proof is
complete.
\end{proof}
%%%%%%%%%%%%%%%%%%%%%%%%%%%%%%%%%%%%%%%%%%%%%%%%%%%%%%%%%%%%%%%%%%%%%%%%%%%%%%%%%%%%%%%%%%%%%%%%%%%%%% Bibliography
\begin{ack} \emph{The research of L.H. is supported by the Belgian
Interuniversity Attraction Pole P06/02 and the European Science Foundation
Program MISGAM.  E.H. acknowledges the support by grant MI 1504/2005 of the
National Fund "Scientific research" of the Bulgarian Ministry of Education and
Science.}
\end{ack}

\end{document}